\newtheorem{thm}{Theorem}[section]
\newtheorem{cor}[thm]{Corollary}
\newtheorem{prop}[thm]{Proposition}
\theoremstyle{definition}
\newtheorem{defn}[thm]{Definition}
\theoremstyle{remark}
\DeclareMathOperator{\comp}{comp}
\DeclareMathOperator{\co}{co}
\DeclareMathOperator{\asc}{asc}
\title[A chromatic quasisymmetric function]{A quasisymmetric function generalization of the chromatic symmetric function}
\author{Brandon Humpert}
\date{\today}
\begin{document}

\begin{abstract}
The chromatic symmetric function $X_G$ of a graph $G$ was introduced by Stanley. In this paper we introduce a quasisymmetric generalization $X^k_G$ called the $k$-chromatic quasisymmetric function of $G$ and show that it is positive in the fundamental basis for the quasisymmetric functions. Following the specialization of $X_G$ to $\chi_G(\lambda)$, the chromatic polynomial, we also define a generalization $\chi^k_G(\lambda)$ and show that evaluations of this polynomial for negative values generalize a theorem of Stanley relating acyclic orientations to the chromatic polynomial.
\end{abstract}

\maketitle

\section{Introduction}

The symbol $\mathbb{P}$ will denote the positive integers.
Let $G=(V,E)$ be a finite simple graph with vertices
$V=[n]=\{1,2,\dots,n\}$.
A \emph{proper coloring} of $G$ is a function $\kappa:V\to\mathbb{P}$ such that
$\kappa(i)\neq\kappa(j)$ whenever $ij\in E$.  Stanley \cite{RPS95}
introduced
the \emph{chromatic symmetric function}
  $$X_G = X_G(x_1,x_2,\dots) = \sum_{\text{proper colorings }\kappa}
x_{\kappa(1)}\cdots x_{\kappa(n)}$$
in commuting indeterminates $x_1,x_2,\dots$.  This invariant is a
symmetric function,
because permuting the colors does not change whether or not a given
coloring is proper.
Moreover, $X_G$ generalizes the
classical chromatic polynomial $\chi_G(\lambda)$ (which can be obtained from
$X_G$ by
setting $k$ of the indeterminates to 1 and the others to 0).

This paper is about a quasisymmetric function generalization of $X_G$,
which arose in the following context.  Recall that the Hasse diagram of a
poset $P$ is the (acyclic) directed graph with an edge $x\to y$ for each
covering relation $x<y$ of $P$.  It is natural to ask which undirected
graphs $G$ are ``Hasse graphs'', i.e., admit orientations that are Hasse
diagrams of posets.  O.~Pretzel \cite{Pret85} gave the following answer
to this question.  Call a directed graph \emph{$k$-balanced} (Pretzel used
the term ``$k$-good'') if, for every cycle $C$ of the underlying
undirected graph of $D$, walking around $C$ traverses at least $k$ edges
forward and at least $k$ edges backward.  (So ``1-balanced'' is synonymous
with ``acyclic''.)  Then $G$ is a Hasse graph if and only if it has a
2-balanced orientation. Note that the condition is more restrictive than the mere absence of
triangles; as pointed out by Pretzel, the Gr\"otzsch graph (Figure \ref{grotzsch}) is
triangle-free, but is not a Hasse graph.

For every proper coloring $\kappa$ of $G$, there is an associated acyclic
orientation defined by directing every edge toward the endpoint with the
larger color.  Accordingly, define a coloring to be \emph{$k$-balanced}
iff it induces a $k$-balanced orientation in this way. We now can define
our main object of study: the \emph{$k$-balanced chromatic quasisymmetric
function}
  $$X^k_G = X^k_G(x_1,x_2,\dots) = \sum_{\text{$k$-balanced colorings
$\kappa$}} x_{\kappa(1)}\cdots x_{\kappa(n)}.$$

For all $k\geq 1$, the power series $X^k_G$ is \emph{quasisymmetric}: that
is, if $i_m<\cdots<i_m$, and $j_1<\cdots<j_k$, then for all
$a_1,\dots,a_m$, the monomials $x_{i_1}^{a_1}\cdots x_{i_m}^{a_m}$ and
$x_{j_1}^{a_1}\cdots x_{j_m}^{a_m}$ have the same coefficient in $X^k_G$.
Moreover, $X^1_G$ is Stanley's chromatic symmetric function (because
``1-balanced'' is synonymous with ``acyclic'').

We obtain the following results:

1. A natural expansion of $X^k_G$ in terms of $P$-partitions
\cite{EC2} of the posets whose Hasse diagram is an orientation of $G$, giving a proof that $X^k_G$ is nonnegative with respect to the fundamental
basis for the quasisymmetric functions (Thm \ref{Lpos}).

2. Explicit formulas for $X^2_G$ for cycles (Prop \ref{CycleCalc}), a proof that $X^k_G$ is always symmetric for cycles (Prop \ref{CycleSymm}), and complete bipartite graphs (Thm \ref{CompBipartCalc}).

3. A reciprocity relationship between $k$-balanced
colorings and $k$-balanced orientations, generalizing Stanley's classical
theorem that \linebreak evaluating the chromatic polynomial $\chi_G(\lambda)$ at $\lambda=-1$
yields the number of acyclic orientations of $G$ (Thm \ref{ChrPolyEval}).

This paper is organized as follows.
In Section 2 the necessary background material on graphs, quasisymmetric functions, and $P$-partitions is introduced. In Section 3, we introduce the invariant $X_G^k$, the $k$-chromatic quasisymmetric function, and look at several of its properties. In Section 4, the invariant $X_G^k$ is analyzed for some special classes of graphs. In Section 5, we introduce a specialization of $X_G^k$ that generalizes the chromatic polynomial and explore its properties.

I would like to thank Kurt Luoto for pointing out the use of $P$-partitions in Theorem \ref{Lpos}, Frank Sottile for advice on Proposition \ref{CycleSymm} and my advisor, Jeremy Martin, for his immense assistance with crafting my first paper.

\section{Background}

In this section we remind the reader of definitions and facts about graphs, posets, and quasisymmetric functions which will appear in the remainder of the paper.

\subsection{Graphs and colorings}

We will assume a familiarity with standard facts and terminology from graph theory, as in \cite{BollGraphThy}. In this paper, we are primarily concerned with simple graphs whose vertex set is $[n] = \{1, 2, \ldots, n\}$.

Recall that an \textit{orientation} of a graph $G$ is a directed graph $\mathcal{O}$ with the same vertices, so that for every edge $\{i, j\}$ of $G$, exactly one of $(i, j)$ and $(j, i)$ is an edge of $\mathcal{O}$. An orientation is often regarded as giving a direction to each edge of an undirected graph.

We define a \textit{weak cycle of an orientation} to be the edges and vertices inherited from a cycle of the underlying undirected graph.

A \textit{coloring} of a graph $G$ is a map $\kappa : [n] \to \{1, 2, \ldots\}$ such that if $\kappa(i) = \kappa(j)$, then $\{i,j\}$ is not an edge of $G$. The \textit{chromatic polynomial} of $G$ is the function $\chi: \mathbb{N} \to \mathbb{N}$ where $\chi(n)$ equals the number of colorings of $G$ using the colors $\{1, 2, \ldots, n\}$. It's a well-known result that $\chi$ is a polynomial with integer coefficients. (See \cite[\S V.1]{BollGraphThy}).

\subsection{Compositions and quasisymmetric functions}

As in \cite[\S 1.2]{EC2}, a \textit{composition} $\alpha$ is an ordered list $(\alpha_1, \alpha_2, \dots, \alpha_\ell)$. The \textit{weight} of a composition is $|\alpha| = \sum \alpha_i$. If $|\alpha| = n$, we will say that $\alpha$ is a composition of $n$ and write $\alpha \models n$. The number $\ell$ is the \textit{length} of $\alpha$.

There is a bijection between compositions of $n$ and subsets of $[n-1]$ which we will use, found in \cite[\S 7.19]{EC2}. For $\alpha = (\alpha_1, \alpha_2, \ldots, \alpha_\ell)$, define $S_\alpha = \{\alpha_1, \alpha_1 + \alpha_2, \ldots, |\alpha| - \alpha_\ell\}$. For $S = \{s_1 < s_2 < \ldots < s_m\}$, define $\co(S) = (s_1, s_2 - s_1, \ldots, s_m - s_{m-1})$. It is easy to check that $\co(S_\alpha) = \alpha$ and $S_{\co(S)} = S$.

The compositions of $n$ are ordered by refinement: for $\alpha, \beta \models n$, $\alpha \prec \beta$ if and only if $S_\alpha \subsetneq S_\beta$. Notice that under the bijection above, this relation is set containment, so that this poset is isomorphic to the boolean poset of subsets of $[n-1]$.

For a permutation $\pi \in \mathfrak{S}_n$, \textit{the ascent set of $\pi$} is $$\asc(\pi) = \{i \in [n] : \pi(i) < \pi(i+1)\}.$$ We can then define \textit{the composition associated to $\pi$} as $$\co(\pi) = \co(\asc(\pi)),$$ where $\co(\pi) \models n$. The parts of $\co(\pi)$ are thus the lengths of the maximal contiguous decreasing subsequences. For example, $\co(52164783) = (3, 2, 1, 2)$.

If $p$ is a polynomial or formal power series and $m$ is a monomial, then let $[m]p$ denote the coefficient of $m$ in $p$. As in \cite[7.19]{EC2}, a \textit{quasisymmetric function} is an element $F \in \mathbb{Q}[[x_1, x_2, \ldots]]$ with the property that 
$[x_{i_1}^{a_1}x_{i_2}^{a_2},\ldots,x_{i_\ell}^{a_\ell}]F = [x_{j_1}^{a_1}x_{j_2}^{a_2},\ldots,x_{j_\ell}^{a_\ell}]F$ whenever $i_1 < i_2 < \cdots < i_\ell$ and $j_1 < j_2 < \cdots < j_\ell$. The subring of $\mathbb{Q}[[x_1, x_2, \ldots]]$ consisting of all quasisymmetric functions will be denoted $\mathcal{Q}$, and the vector space spanned by all quasisymmetric functions of degree $n$ will be denoted $\mathcal{Q}_n$. The \textit{standard basis} or \textit{monomial basis} for $\mathcal{Q}_n$ is indexed by compositions $\alpha = (\alpha_1, \alpha_2, \ldots, \alpha_\ell) \models n$, and is given by 
$$M_\alpha = \sum_{i_1<i_2<\cdots<i_\ell} x_{i_1}^{\alpha_1}x_{i_2}^{\alpha_2}\cdots x_{i_\ell}^{\alpha_\ell}.$$
Another basis for $\mathcal{Q}_n$ is the \textit{fundamental basis}, whose elements are
\begin{equation}
\label{L-basis}
L_\alpha = \sum_{\substack{i_1 \leq i_2 \leq \cdots \leq i_n \\ i_j < i_{j+1} \text{ if } j \in S_\alpha}} x_{i_1}x_{i_2} \ldots x_{i_n},
\end{equation}
where $\alpha \models n$.

Working with the bijection between sets and compositions, and utilizing the fact that the refinement poset is boolean, these bases are related by M\"obius inversion as: 
\begin{equation} \label{LtoM} L_\alpha = \sum_{\beta \succeq \alpha} M_\beta , \end{equation}
\begin{equation} \label{MtoL} M_\alpha = \sum_{\beta \succeq \alpha} (-1)^{\ell(\beta) - \ell(\alpha)} L_\beta . \end{equation}

\subsection{$P$-partitions and the quasisymmetric function of a poset}

We follow Stanley \cite[\S 4.5]{EC1},  \cite[\S 7.19]{EC2}, with the exception that what he calls a reverse strict $P$-partition, we call a $P$-partition.

A poset $P$ whose elements are a subset of $\mathbb{P}$ is called \textit{naturally labelled} if $i <_P j$ implies that $i<_\mathbb{N} j$. A $P$\textit{-partition} is a strict order-preserving map $\tau: P \to [n]$, where $P$ be a naturally labelled poset on $[n]$.

\begin{defn}
Let $\pi \in \mathfrak{S}_n$. Then a function $f: [n] \to \mathbb{P}$ is $\pi$\textit{-compatible} whenever
\begin{gather*}
f(\pi_1) \leq f(\pi_2) \leq \cdots \leq f(\pi_n) \\
\text{and} \\
f(\pi_i) < f(\pi_{i+1}) \text{ if } \pi_i < \pi_{i+1}.
\end{gather*}
\end{defn}

For all $f: [n] \to \mathbb{P}$, there exists a unique permutation $\pi \in \mathfrak{S}_n$ for which $f$ is $\pi$-compatible. Specifically, if $\{i_1 < i_2 < \cdots < i_k\}$ is the image of $f$, then we obtain $\pi$ by listing the elements of $f^{-1}(i_1)$ in increasing order, then the elements of $f^{-1}(i_2)$ in increasing order, and so on.

\begin{prop}[Lemma 4.5.3 in \cite{EC1}] \label{P-part Equiv}
Let P be a natural partial order on $[n]$, and let $\mathcal{L}_P \subseteq \mathfrak{S}_n$ be the set of linear extensions of $P$. Then $\tau: P \to \mathbb{P}$ is a $P$-partition if and only if $\tau$ is $\pi$-compatible for some $\pi \in \mathcal{L}_P$.
\end{prop}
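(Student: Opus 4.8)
The plan is to prove the two implications separately, using the explicit construction of the compatible permutation supplied in the discussion preceding the statement for the forward direction, and exploiting the natural labelling of $P$ for the reverse direction.

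For the forward implication, suppose $\tau$ is a $P$-partition, i.e.\ a strictly order-preserving map. By the fact recorded just before the proposition, there is a unique $\pi \in \mathfrak{S}_n$ for which $\tau$ is $\pi$-compatible, obtained by listing the fibers $\tau^{-1}(i_1), \tau^{-1}(i_2), \dots$ (where $i_1 < i_2 < \cdots$ is the image of $\tau$) each in increasing order. It remains only to check that this $\pi$ is a linear extension of $P$. If $a <_P b$, then strict order-preservation gives $\tau(a) < \tau(b)$, so $a$ lies in an earlier fiber than $b$ and therefore appears before $b$ in $\pi$. Hence $\pi \in \mathcal{L}_P$, and $\tau$ is $\pi$-compatible for a linear extension, as required.

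For the reverse implication, suppose $\tau$ is $\pi$-compatible for some $\pi \in \mathcal{L}_P$, and I must show $\tau$ is strictly order-preserving. Fix $i <_P j$. Since $\pi$ is a linear extension, writing $i = \pi_a$ and $j = \pi_b$ we have $a < b$; and since $P$ is naturally labelled, $i <_P j$ forces $i < j$ as integers. Now $\pi$-compatibility already yields the weak chain $\tau(\pi_a) \le \tau(\pi_{a+1}) \le \cdots \le \tau(\pi_b)$, so I only need one strict step somewhere between positions $a$ and $b$. This is where natural labelling enters: the values $\pi_a, \pi_{a+1}, \dots, \pi_b$ start at $i$ and end at $j > i$, so they cannot be weakly decreasing throughout, and there is some position $c$ with $a \le c < b$ and $\pi_c < \pi_{c+1}$. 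At such an ascent $\pi$-compatibility gives the strict inequality $\tau(\pi_c) < \tau(\pi_{c+1})$, which upgrades the chain to $\tau(i) = \tau(\pi_a) < \tau(\pi_b) = \tau(j)$.

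I expect the reverse direction to be the crux. The definition of $\pi$-compatibility only forces strict increases of $\tau$ at the ascents of $\pi$, whereas being a $P$-partition demands a strict increase across every relation $i <_P j$; bridging this gap is exactly the ascent-existence argument above, and it is precisely the natural labelling hypothesis ($i <_P j \Rightarrow i < j$) that guarantees the required ascent. The forward direction, by contrast, is essentially bookkeeping once the canonical compatible permutation is in hand.
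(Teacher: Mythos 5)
Your proof is correct and takes essentially the same route as the paper: the forward direction identifies the canonical compatible permutation via the fibers of $\tau$ and checks it is a linear extension, and the reverse direction deduces strictness of $\tau$ across each relation $i <_P j$. In fact your reverse direction is more careful than the paper's own, which asserts ``$i$ appears before $j$ in $\pi$, and so $\tau(i) < \tau(j)$'' without comment; your ascent-existence argument, using the natural labelling to force some position $c$ with $\pi_c < \pi_{c+1}$ between $i$ and $j$, supplies exactly the justification that step implicitly requires.
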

\begin{proof}
Given a $P$-partition $\tau$, let $\pi$ be the unique permutation of $[n]$ so that $\tau$ is $\pi$-compatible. Now if $i <_P j$, then $\tau(i) < \tau(j)$, and since $\tau$ is $\pi$-compatible, $i$ must appear before $j$ in $\pi$. Thus $\pi$ is a linear extension of $P$.

On the the other hand, given a $\pi$-compatible function $\tau$ with $\pi$ a linear extension of $P$, if $i <_P j$, then $i$ appears before $j$ in $\pi$, and so $\tau(i) < \tau(j)$.
\end{proof}

We write $S_\pi$ for the set of all $\pi$-compatible functions, and $\mathcal{A}(P)$ for the set of all $P$-partitions. Then from Proposition (\ref{P-part Equiv}) we get the decomposition
\begin{equation} \label{P-part decomp} \mathcal{A}(P) = \bigsqcup_{\pi \in \mathcal{L}_P} S_\pi . \end{equation}

The form of the fundamental quasisymmetric basis given in equation (\ref{L-basis}) and the definition of $\pi$-compatibility implies that
\begin{equation*}
\sum_{\tau \in S_\pi} x^\tau 	= L_{\co(\pi)}(x).
\end{equation*}

Given a poset $P$, we define the \textit{quasisymmetric function of a poset} $K_P$ to be

$$ K_P(x)	 = \sum_{\tau \in \mathcal{A}(P)} x^\tau .$$
In the case that $P$ is naturally labelled, we also have from \cite[Corollary 7.19.6]{EC2} that
\begin{align} 
K_P(x)		& = \sum_{\pi \in \mathcal{L}_P} \sum_{\tau \in S_\pi} x^\tau \label{KPdef} \\
			& = \sum_{\pi \in \mathcal{L}_P} L_{\co(\pi)}(x), \nonumber
\end{align}
where the first equality here is from equation (\ref{P-part decomp}). Further, notice that for any two natural relabellings $P', P''$ of a poset $P$, we have $\mathcal{L}_{P'} = \mathcal{L}_{P''}$, and thus from equation (\ref{KPdef}), $K_{P'} = K_{P''}$. So, even though $P$ may not be naturally labelled, we can use the above to calculate $K_P$.

\section{The $k$-chromatic quasisymmetric function of a graph}

Given a poset $P$ on $[n]$, define $G_P$ to be \textit{the graph induced by $P$} with vertices $[n]$ and edges given by the covering relations of $P$. Note that $G_P$ is graph-isomorphic to the Hasse diagram of $P$.

A natural question to ask is, given an arbitrary graph, does there exist a poset which induces it? We will call any such graph a \textit{Hasse graph}.



To answer the question, we notice that a poset $P$ can be identified with an orientation $\mathcal{O}_P$ of $G_P$ which we will call the \textit{orientation induced by $P$} by directing each edge of $G_P$ towards the larger element in the covering relation. These orientations are necessarily acyclic, but they have the additional property that every weak cycle has at least 2 edges oriented both forward and backward, due to the fact that Hasse diagrams include only the covering relations of the poset. That is, weak cycles may not have all but one edge oriented consistently, as in Figure \ref{bypass}; such an obstruction is called a \textit{bypass}. Using the correspondence, we see that a graph is a Hasse graph if and only if it has such an orientation.

\begin{figure}[h]
\centering
\includegraphics[height=.1\textheight]{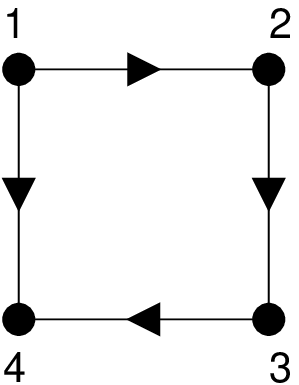}
\caption{A bypass on 4 vertices}
\label{bypass}
\end{figure}

Pretzel \cite{Pret85} observed that the above condition was related to acyclicity (in which each weak cycle has at least 1 edge oriented both forward and backward) and made the following definition.

\begin{defn}
Let $G$ be an undirected simple graph, and let $\mathcal{O}$ be an orientation of $G$. Then, for $k \geq 1$, $\mathcal{O}$ is $k$\textit{-balanced}\footnote{Pretzel used the terminology \textit{$k$-good}.} if there are at least $k$ edges oriented both forward and backward along each weak cycle. That is, given any cycle of $G$ with edges $\{v_1,v_2\},\{v_2,v_3\},\dots, \{v_{r-1},v_r\},\{v_r,v_1\}$, then $\mathcal{O}$ contains at least $k$ directed edges of the form  $(v_i,v_{i+1})$ and at least $k$ directed edges of the form $(v_{i+1},v_i)$ (where all subscripts are taken modulo~$r$).
\end{defn}

Using this definition, we can see that an orientation is acyclic if and only if it is 1-balanced. Similarly, a graph is a Hasse graph if and only if it has a 2-balanced orientation.

Recall that the \textit{girth} of a graph is the length of its smallest cycle. Then, for an orientation $\mathcal{O}$ of $G$ to be $k$-balanced, it is necessary that the girth of $G$ be at least $2k$. This is not sufficient --- the smallest counterexample is the Gr\"otzsch graph (Figure \ref{grotzsch}), which has girth 4, but does not have a 2-balanced coloring \cite{Pret85}.
(In fact, due to a result of Ne\v set\v ril and R\"odl \cite[Corollary 3]{NesRod78}, there exist graphs of arbitrarily high girth which, under any orientation, contain a bypass and are therefore not 2-balanced.)

\begin{figure}[h]
\centering
\includegraphics[height=.3\textheight]{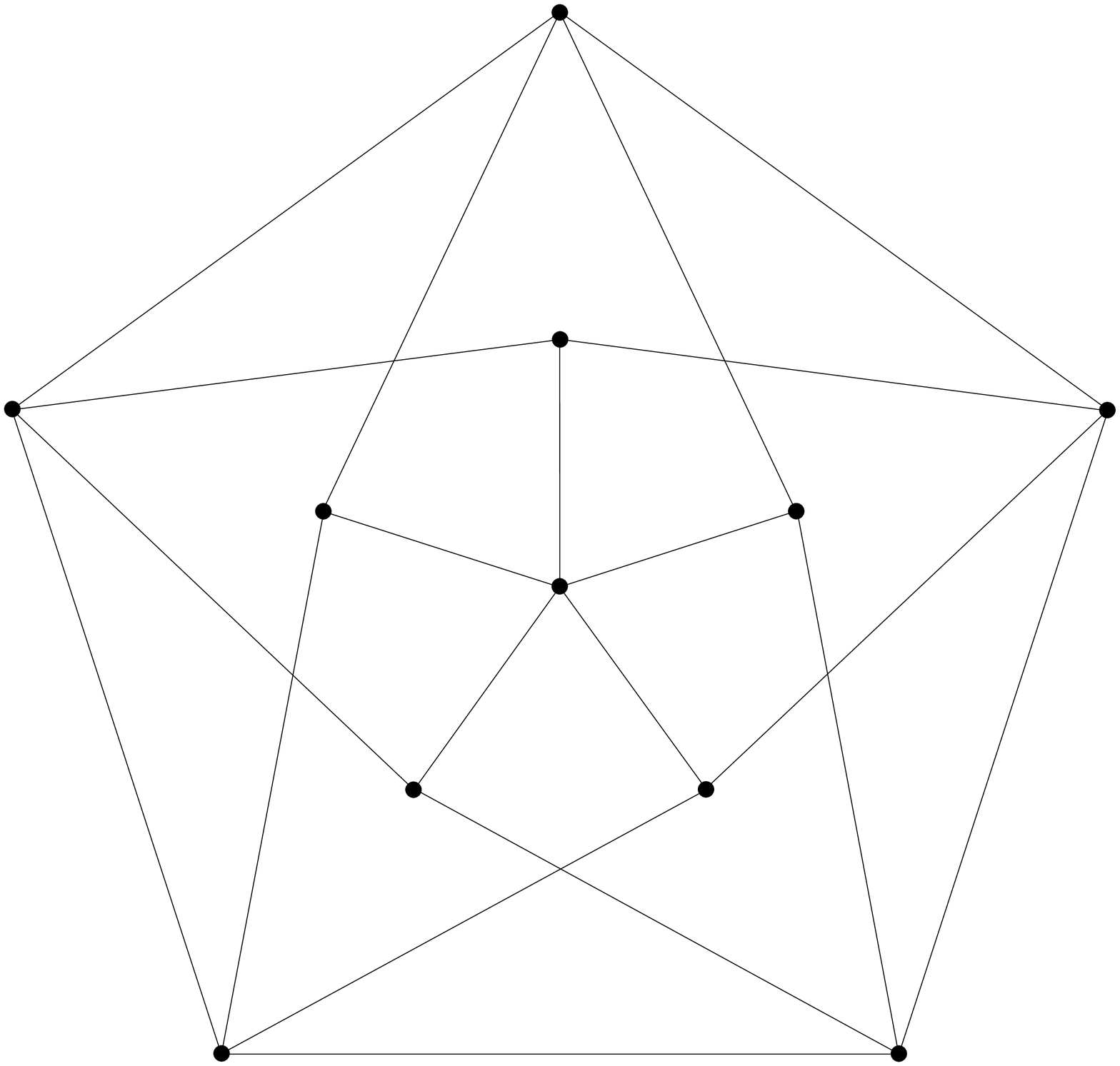}
\caption{The Gr\"otzsch graph}
\label{grotzsch}
\end{figure}

Given a poset, we have already seen that there is a corresponding acyclic orientation $\mathcal{O}_P$. Conversely, given an acyclic digraph $\mathcal{O}$, we define $P_\mathcal{O}$, \textit{the poset induced by $\mathcal{O}$}, to be the poset generated by the edges of $\mathcal{O}$. Note that the edges of $\mathcal{O}$ are the covering relations of $P_\mathcal{O}$ if and only if $\mathcal{O}$ is 2-balanced.

\begin{defn}
Let $G$ be an undirected simple graph, and let $\kappa: V(G) \to \mathbb{P}$ be a proper coloring of $G$. Then the \textit{orientation induced by $\kappa$} is the orientation $\mathcal{O}_\kappa$ where each edge is directed towards the vertex with the greater color. If $\mathcal{O}_\kappa$ is $k$-balanced, then $\kappa$ is called a \textit{$k$-balanced coloring}.
\end{defn}

\begin{figure}[h]
\centering
\includegraphics[height=.2\textheight]{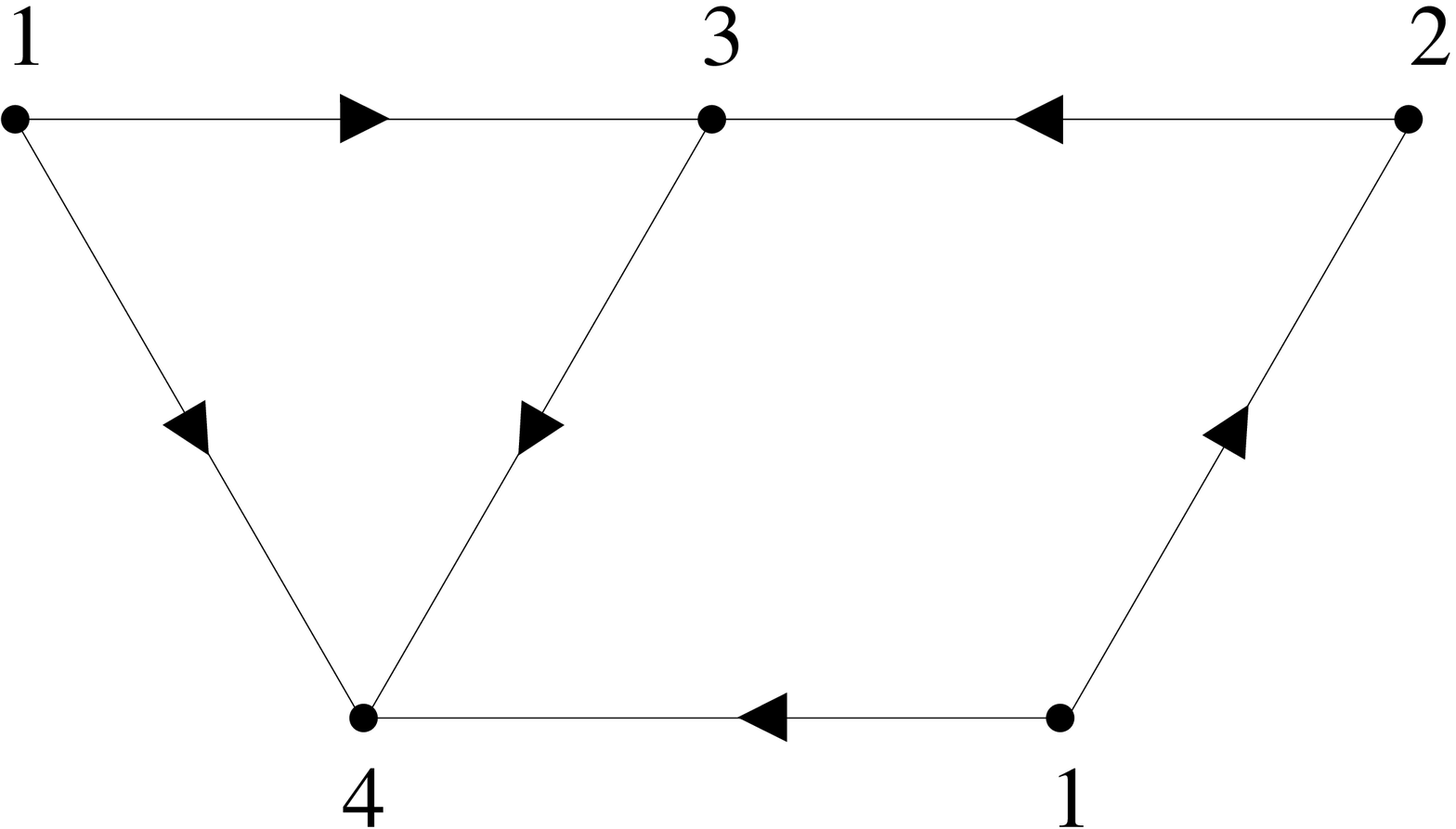}
\caption{A graph coloring and its induced orientation}
\label{ColorOrient}
\end{figure}

\begin{defn}
Given a simple graph $G$ with $n$ vertices and any positive integer $k$, define the \textit{k-balanced chromatic quasisymmetric function of G} by
$$X^k_G = \displaystyle X_G^k(x_1,x_2,\ldots) = \sum_{\kappa} x_{\kappa(1)}x_{\kappa(2)} \dots x_{\kappa(n)},$$
the sum over all $k$-balanced colorings $\kappa: V(G) \to \mathbb{P}$.
\end{defn}


To see that $X_G^k$ is indeed quasisymmetric, let $\kappa$ be a $k$-balanced coloring and let $\tau : \mathbb{N} \to \mathbb{N}$ be an order-preserving injection. Then $\kappa' = \tau \circ \kappa$ is also a proper coloring, and since $\tau$ is order-preserving, every edge of $\mathcal{O}_{\kappa'}$ is oriented identically in $\mathcal{O}_{\kappa}$ so that $\kappa'$ is also $k$-balanced. If $\tau^*$ is defined by $\tau^*(x_i) = x_{\tau(i)}$, then the previous implies that $X_G^k$ is invariant under any $\tau^*$, which is exactly the condition necessary for quasisymmetry.

In the case that $k=1$, $X_G^1$ is symmetric. In particular, a 1-balanced coloring is a proper coloring, so $X_G^1$ is Stanley's chromatic symmetric function $X_G$. In general, however, $X_G^k$ is not symmetric. The smallest counterexample is $G=K_{3,3}$ the complete bipartite graph, where $[M_{2121}]X_{K_{3,3}}^2 = 36$, but $[M_{2112}]X_{K_{3,3}}^2 = 18$.

Let $G+H$ denote the disjoint union of two graphs $G,H$. As with the chromatic symmetric function, we have that $X_{G+H}^k = X_G^k \cdot X_H^k$, which follows from the definition of $X_G^k$.

The girth $g$ of a graph $G$ plays an important role in determining $X_G^k$, as must be expected from the remarks about girth above. That is, if $k > \frac{g}{2}$, $X_G^k = 0$. As a special case, if $G$ has a triangle, then $g = 3$ and so $X_G^k = 0$ for $k > 1$. Alternately, if $g = \infty$ (that is, $G$ is a forest), then the condition that weak cycles are $k$-balanced is vacuous, so that $X_G^k = X_G$.

In Stanley's original paper on the chromatic symmetric function, he shows that $X_G$ does not distinguish between graphs generally, giving as a counterexample two graphs of girth 3 with the same chromatic symmetric function. It is also true that $X_G^k$ does not distinguish between graphs generally, for the obvious reason that $X_G^k = 0$ for all graphs with small girth. However, it is unknown whether $X_G^k$ distinguishes graphs on which the function is nonzero, nor whether $X_G^k$ distinguishes graphs that $X_G$ does not (in fact, the author is not aware of two graphs of girth greater than 3 which share the same chromatic symmetric function.) In the former case, since $X_G^k = X_G$ for $G$ a forest, if it were shown that $X_G$ does not distinguish between trees, then clearly $X_G^k$ would not distinguish between graphs.

\subsection{$L$-positivity}

We have given the $k$-balanced chromatic quasisymmetric function
using the standard monomial basis, where the coefficients count colorings.
As we now show, $X_G^k$ has a natural positive expansion in the
fundamental basis $\{L_\alpha\}$. The idea of the proof is to interpret colorings
as certain $P$-partitions.

\begin{thm} \label{Lpos}
For all graphs $G$ and for all $k$, $X_G^k$ is $L$-positive.
\end{thm}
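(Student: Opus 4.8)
The plan is to organize the sum defining $X_G^k$ according to the orientation that each coloring induces, and then to recognize each resulting block as the $P$-partition generating function of a poset, which is manifestly $L$-positive by the background material.

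First I would partition the set of $k$-balanced colorings according to the value of $\mathcal{O}_\kappa$. Since a coloring $\kappa$ is declared $k$-balanced precisely when $\mathcal{O}_\kappa$ is a $k$-balanced orientation, grouping the colorings by their induced orientation yields
$$X_G^k = \sum_{\mathcal{O}} \; \sum_{\substack{\kappa \\ \mathcal{O}_\kappa = \mathcal{O}}} x^\kappa,$$
where the outer sum ranges over all $k$-balanced orientations $\mathcal{O}$ of $G$ (orientations that fail to be $k$-balanced, or that are induced by no coloring, contribute nothing, so no generality is lost).

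Next I would identify the inner sum. A coloring $\kappa$ satisfies $\mathcal{O}_\kappa = \mathcal{O}$ exactly when $\kappa(i) < \kappa(j)$ for every directed edge $i \to j$ of $\mathcal{O}$; in particular every such $\kappa$ is a genuine proper coloring. By transitivity this requirement is equivalent to demanding $\kappa(i) < \kappa(j)$ whenever $i <_{P_\mathcal{O}} j$, so the colorings inducing $\mathcal{O}$ are precisely the strict order-preserving maps of $P_\mathcal{O}$, i.e.\ the $P_\mathcal{O}$-partitions, and the inner sum is $K_{P_\mathcal{O}}$. One technical point must be addressed here: $P_\mathcal{O}$ inherits its labels from $[n]$ and need not be naturally labelled. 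However, the strict order-preservation condition refers only to the partial order $<_{P_\mathcal{O}}$ and not to the labels, so the generating function $\sum_{\kappa} x^\kappa$ is unchanged under a poset isomorphism; we may therefore replace $P_\mathcal{O}$ by any natural relabelling, and the remark following equation (\ref{KPdef}) then licenses writing the inner sum as $K_{P_\mathcal{O}}$.

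Finally I would invoke the expansion $K_P = \sum_{\pi \in \mathcal{L}_P} L_{\co(\pi)}$ established in the background to obtain
$$X_G^k = \sum_{\mathcal{O} \text{ $k$-balanced}} \; \sum_{\pi \in \mathcal{L}_{P_\mathcal{O}}} L_{\co(\pi)},$$
a sum of fundamental quasisymmetric functions with nonnegative integer coefficients, which proves $L$-positivity. The only genuinely delicate step is the identification in the previous paragraph: correctly matching colorings with $P$-partitions of the induced poset and verifying that the labelling convention causes no harm. The opening reindexing and the closing citation are, by contrast, routine bookkeeping.
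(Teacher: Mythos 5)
Your proposal is correct and follows essentially the same route as the paper: group the $k$-balanced colorings by their induced orientation, identify the colorings inducing a fixed $k$-balanced orientation $\mathcal{O}$ with the (strict order-preserving) $P_\mathcal{O}$-partitions after a natural relabelling, and then expand each $K_{P_\mathcal{O}}$ in the fundamental basis via linear extensions. Your treatment of the transitivity step and the relabelling issue is, if anything, slightly more explicit than the paper's.
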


\begin{proof}
Let $\mathcal{O}$ be any $k$-balanced orientation of $G$, and define $P_\mathcal{O}$ to be the poset induced by $\mathcal{O}$. (Notice that if $k$ = 1, $G$ may not be isomorphic to the Hasse diagram of $P_\mathcal{O}$.) 

Choose an arbitrary natural relabelling $P'_\mathcal{O}$ of $P_\mathcal{O}$. Now a $P'_\mathcal{O}$-partition is just an order-preserving map $f:P'_\mathcal{O} \to \mathbb{P}$. If we consider $f$ as a function on the undirected graph $G$, then $f$ is a coloring of $G$ which induces $\mathcal{O}$. That is to say, $f$ is a $k$-balanced coloring of $G$. Thus, any $P'_\mathcal{O}$-partition is a $k$-balanced coloring of $G$.

Conversely, any $k$-balanced coloring $\kappa$ is a $P'_{\mathcal{O}_\kappa}$-partition for the appropriate natural relabelling.

Thus,
\begin{align*}
X_G^k 	& = \sum_{\mathcal{O}} K_{P_\mathcal{O}} \\
		& = \sum_{\mathcal{O}} \sum_{\pi \in \mathcal{L}_{P'_\mathcal{O}}} L_{\co(\pi)},
\end{align*}
where the sum is over all $k$-balanced orientations $\mathcal{O}$ of $G$.
\end{proof}

\begin{figure}[h]
	\centering
	\begin{minipage}{.4\textwidth}
		\centering
		\includegraphics[width=\textwidth]{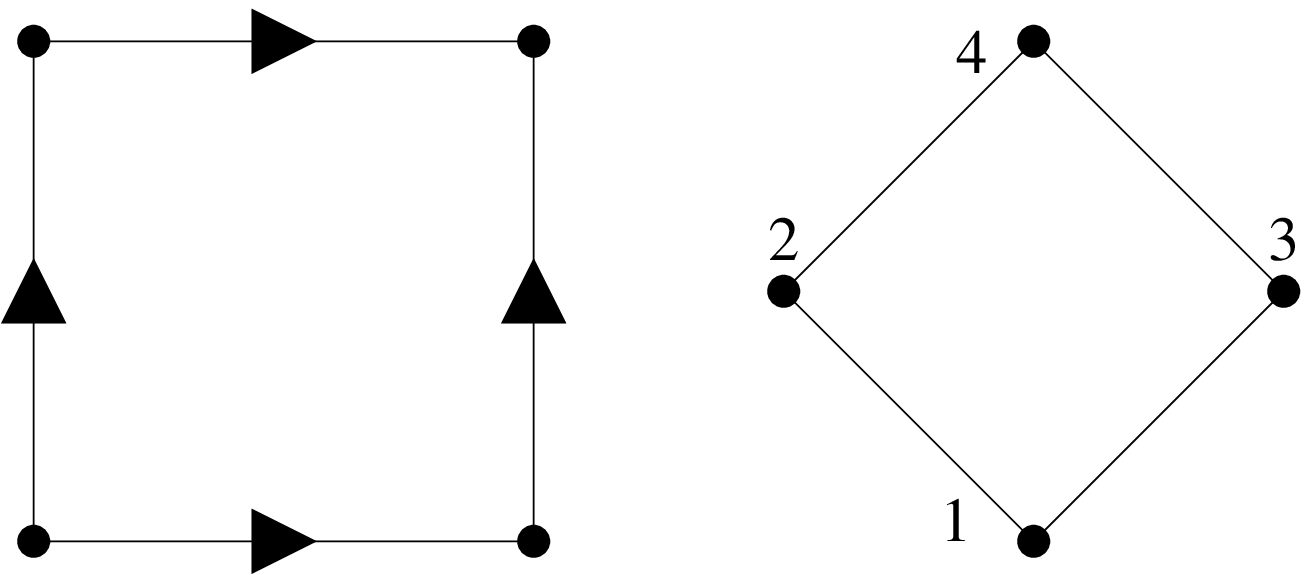}
		\caption{$\mathcal{O}_1$}
		\label{BoolOrient}
	\end{minipage}
	\hspace{.1\textwidth}
	\begin{minipage}{.4\textwidth}
		\centering
		\includegraphics[width=\textwidth]{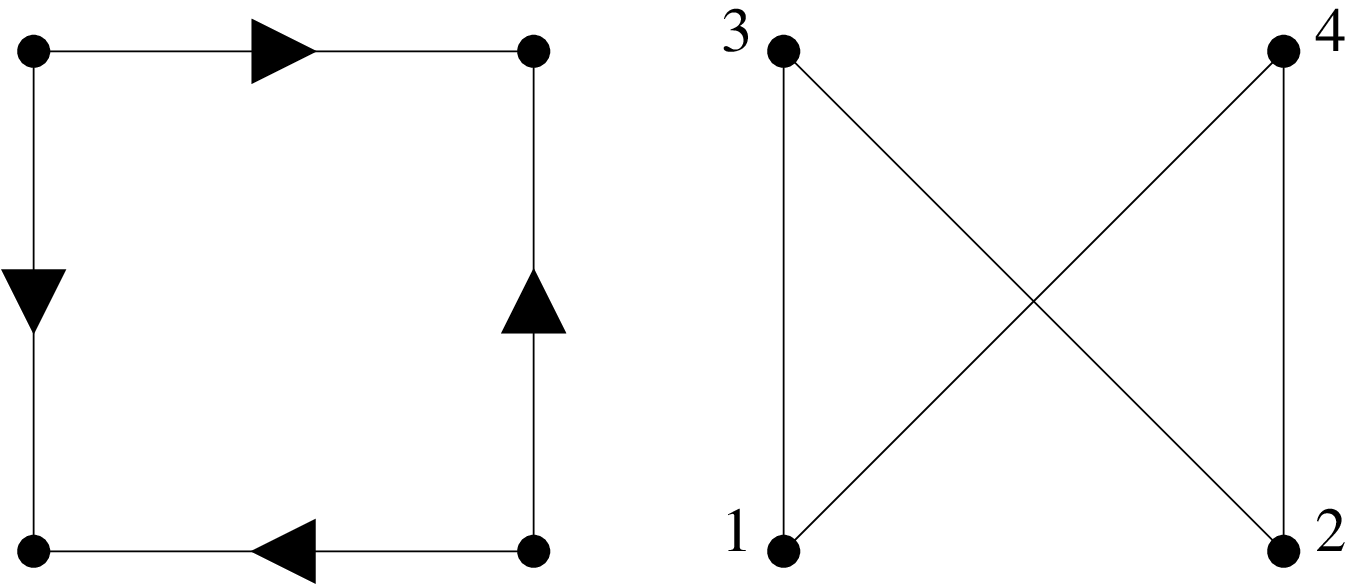}
		\caption{$\mathcal{O}_2$}
		\label{NonBoolOrient}
	\end{minipage}
\end{figure}

For an example of how this theorem works in practice, consider the 4-cycle $C_4$. The 2-balanced orientations of $C_4$ are of two types: $\mathcal{O}_1$ pictured in Figure \ref{BoolOrient}, and $\mathcal{O}_2$ pictured in Figure \ref{NonBoolOrient}. There are 4 orientations of the form $\mathcal{O}_1$ and 2 orientations of the form $\mathcal{O}_2$. We then calculate the linear extensions for $P_{\mathcal{O}_1}$ as $\{1234, 1324\}$ and for $P_{\mathcal{O}_2}$ as $\{1234, 1243, 2134, 2143\}$. Thus,
\begin{align*}
X_{C_4}^2 	& = 4(L_{1111} + L_{121}) + 2(L_{1111} + L_{112} + L_{211} + L_{22}) \\
			& = 6L_{1111} + 2L_{211} + 4L_{121} + 2L_{112} + 2L_{22}.
\end{align*}
This is in practice a much quicker way to compute $X_G^k$ than the original definition, which requires one to check the $k$-balance of every proper coloring of $G$, which in turn amounts to checking each weak cycle of the graph for each proper coloring.

\section{$X^k_G$ on special classes of graphs}

\subsection{Cycles}

Let the cycle on $n$ vertices be denoted by $C_n$. The colorings of $C_n$ which are not 2-balanced are easy to describe. Specifically, the only proper colorings which can induce a bypass (see Figure \ref{bypass}) are the colorings with $n$ distinct colors arranged in order around the cycle. We can use this to obtain the following proposition.

\begin{prop} \label{CycleCalc} For the cyclic graph $C_n$, $$X_{C_n}^2 = X_{C_n} - 2nM_{11\dots 1}.$$ \end{prop}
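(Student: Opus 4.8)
The plan is to prove the identity by comparing the monomial-basis expansions of both sides, which amounts to identifying exactly which proper colorings of $C_n$ fail to be $2$-balanced and showing that they contribute precisely $2n M_{11\cdots 1}$. Since $X_{C_n}^1 = X_{C_n}$ counts all proper colorings and $X_{C_n}^2$ counts only the $2$-balanced ones, the difference $X_{C_n} - X_{C_n}^2$ is the generating function $\sum_\kappa x_{\kappa(1)}\cdots x_{\kappa(n)}$ over those proper colorings $\kappa$ whose induced orientation $\mathcal{O}_\kappa$ is acyclic (automatic, since $C_n$ has a unique cycle and any proper coloring induces an acyclic orientation) but \emph{not} $2$-balanced. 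So the whole problem reduces to characterizing the ``bad'' colorings and summing their weights.

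First I would establish the combinatorial heart of the argument, already flagged in the text: a proper coloring $\kappa$ of $C_n$ induces a non-$2$-balanced orientation if and only if the unique weak cycle is a bypass, i.e.\ all but one edge are oriented consistently around the cycle. For $C_n$ this forces the colors to be \emph{strictly} monotonic as one walks around all but one edge, which means $\kappa$ must assign $n$ distinct colors that increase (resp.\ decrease) along the cycle in a consistent rotational direction, with a single ``descent'' (resp.\ ``ascent'') closing it up. I would argue that such a coloring is determined by a choice of the $n$ distinct color values together with a starting vertex and a direction, and that precisely these colorings use $n$ distinct colors arranged monotonically around the cycle. Counting: for each strictly increasing choice of $n$ color values $i_1 < i_2 < \cdots < i_n$, the number of ways to place them around the oriented cycle so as to produce a bypass is $2n$ --- there are $n$ rotational placements and $2$ orientations (clockwise/counterclockwise), matching the coefficient in the statement.

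Next I would translate this count into the monomial basis. Each bad coloring uses $n$ distinct colors, so its monomial is of the form $x_{i_1} x_{i_2} \cdots x_{i_n}$ with all indices distinct; summing over all increasing tuples $i_1 < \cdots < i_n$ gives exactly $M_{11\cdots 1}$ (the part with $n$ ones), and the multiplicity $2n$ coming from the placements yields $2n M_{11\cdots 1}$. Hence $X_{C_n} - X_{C_n}^2 = 2n M_{11\cdots 1}$, which rearranges to the claimed formula. The one subtlety is to ensure each bad coloring is counted exactly once and that distinct (placement, direction) pairs give genuinely distinct colorings rather than collapsing; this is where I would be careful.

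The main obstacle I expect is the bijective bookkeeping in the count of $2n$: I must verify that rotations and the two orientations produce $2n$ \emph{distinct} colorings for each fixed color set, with no coincidences and no double counting, and that every bypass coloring arises this way. In particular I would confirm that a monotone arrangement of $n$ distinct colors around the cycle really does induce a bypass (exactly one edge goes ``against'' the monotone direction, so that edge is the lone backward edge while all $n-1$ others go forward, violating the $k=2$ requirement of at least two backward edges), and conversely that any bypass on $C_n$ forces strict monotonicity and hence all-distinct colors. Everything else is a routine comparison of coefficients in the $M$-basis.
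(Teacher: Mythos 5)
Your proposal is correct and follows essentially the same route as the paper, which observes that the only proper colorings of $C_n$ failing to be $2$-balanced are the bypass colorings with $n$ distinct colors arranged monotonically around the cycle, and that these contribute $2n$ colorings (n rotations times 2 directions) per choice of color set, i.e.\ $2nM_{11\cdots 1}$. Your additional care about distinctness of the $2n$ placements and the acyclicity of orientations induced by proper colorings fills in details the paper leaves implicit.
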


In particular, $X^2_{C_n}$ is symmetric for all $n$. In fact, although we do not have an explicit formula for $k\geq 3$, this fact holds for all values of $k$.

\begin{prop} \label{CycleSymm} For the cyclic graph $C_n$, $X_{C_n}^k$ is symmetric for all $k$. \end{prop}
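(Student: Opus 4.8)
The plan is to prove the stronger statement that the coefficient of each monomial $x_1^{\alpha_1}\cdots x_\ell^{\alpha_\ell}$ in $X_{C_n}^k$ depends only on the multiset $\{\alpha_1,\dots,\alpha_\ell\}$. Since $X_{C_n}^k$ is quasisymmetric, this is equivalent to symmetry, and since rearrangements of a composition are generated by adjacent transpositions, it suffices to show that for each $i$ the number of $k$-balanced colorings of content $(\alpha_1,\dots,\alpha_i,\alpha_{i+1},\dots,\alpha_\ell)$ equals the number of content $(\alpha_1,\dots,\alpha_{i+1},\alpha_i,\dots,\alpha_\ell)$, where ``content $\alpha$'' means color $j$ is used exactly $\alpha_j$ times. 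I would produce an explicit content-swapping bijection that moreover preserves the number of edges directed forward around $C_n$; since the forward and backward counts sum to $n$, preserving one preserves both, so $k$-balance is respected automatically.

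The obvious candidate, the global swap $s_i$ of the colors $i$ and $i+1$, fails: it reverses exactly those edges of $C_n$ joining an $i$-colored vertex to an $(i+1)$-colored vertex, and this changes the forward-edge count, possibly destroying $k$-balance. (This is precisely why $X_G^k$ is not symmetric for general $G$.) The fix I propose is to swap colors only locally and with the correct parity. Consider the maximal arcs of $C_n$ consisting of cyclically consecutive vertices colored $i$ or $i+1$; within each such arc the two colors must alternate, and by maximality every vertex adjacent to an arc from outside carries a color $\ne i,i+1$. An arc of even length contains equally many $i$'s and $(i+1)$'s, while an arc of odd length has a well-defined majority color and contributes an imbalance of $\pm 1$ between the counts of $i$ and $i+1$. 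Define the involution $\Phi$ that, in every odd arc, interchanges the alternating pattern with majority $i$ and the one with majority $i+1$ (e.g.\ $i,i{+}1,\dots,i \mapsto i{+}1,i,\dots,i{+}1$), leaving all even arcs and all other vertices untouched.

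The key steps are then: (1) $\Phi$ is an involution on proper colorings, since it only permutes colors within $\{i,i+1\}$ and preserves the alternation, hence properness; (2) $\Phi$ exchanges content $\alpha$ and $s_i\alpha$, because each odd arc negates its $\pm 1$ imbalance while even arcs and other colors are unchanged, so the total counts of $i$ and $i+1$ are swapped; and (3) $\Phi$ preserves the number of forward edges. Step (3) is the crux, and I expect the main work to be the verification that flipping a single odd arc changes no edge direction in the forward count: the internal edges of an odd arc carry equally many ascents and descents both before and after the flip (this is exactly where oddness is used---an even arc would gain or lose one ascent, which is why even arcs must be left fixed), and the two boundary edges are unaffected because the neighboring colors lie entirely below or entirely above both $i$ and $i+1$. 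One should also dispose of the degenerate case in which the whole cycle is $2$-colored (forcing $n$ even and content $(n/2,n/2)$, which is already fixed by $s_i$). Granting (1)--(3), $\Phi$ restricts to a bijection between the $k$-balanced colorings of content $\alpha$ and those of content $s_i\alpha$, giving $[M_\alpha]X_{C_n}^k=[M_{s_i\alpha}]X_{C_n}^k$ and hence the symmetry of $X_{C_n}^k$.
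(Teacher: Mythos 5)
Your proposal is correct and follows essentially the same route as the paper: reduce symmetry to the adjacent swap $\alpha_i \leftrightarrow \alpha_{i+1}$, restrict to the subgraph of $C_n$ colored $\{i,i+1\}$, and flip colors only on the arcs with an odd number of vertices, where the alternating edge directions come in equal numbers so that reversing them preserves $k$-balance, while edges meeting colors outside $\{i,i+1\}$ are untouched. The only cosmetic difference is the degenerate all-$\{i,i+1\}$ case, which the paper handles by the global swap (reversing every edge) and you handle by noting the content is already fixed; both work.
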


\begin{proof}
Similar to the proof that the Schur functions are smmetric\cite[\S 7.10]{EC2}, we will show that $X_{C_n}^k$ is  invariant under changing $x_i$ to $x_{i+1}$. If $\alpha = (\alpha_1, \dots, \alpha_i, \alpha_{i+1}, \dots, \alpha_{\ell})$, let $\widetilde{\alpha} = (\alpha_1, \dots, \alpha_{i+1}, \alpha_i, \dots, \alpha_{\ell})$. Then if $\mathcal{C}_{\alpha}$ denotes the set of $k$-balanced colorings with composition type $\alpha$, we want a bijection $\varphi : \mathcal{C}_\alpha \to \mathcal{C}_{\widetilde{\alpha}}$.

Let $\kappa \in \mathcal{C}_\alpha$. The graph induced by the inverse image $\kappa^{-1}(\{i, i+1\})$ is either the entire cycle or a collection of disjoint paths. In the former case, we set $\varphi(\kappa)(v) = i$ when $\kappa(v) = i+1$ and vice versa. The preserves $k$-balance since it reverses all edges.

If $\kappa^{-1}(\{i, i+1\})$ induces a collection of paths, let $\varphi(\kappa)(v)$ swap $i$ and $i+1$ if $v$ is in such a path of odd length and otherwise set $\varphi(\kappa)(v) = \kappa(v)$. We claim that the orientation induced by $\varphi(\kappa)$ is $k$-balanced. Firstly, if $j \neq i, i+1$, then $j> i$ if and only if $j > i+1$. Thus, no edges outside of the odd lenth paths will be reoriented. Secondly, there are an even number of edges in each odd length path, with exactly half pointing each direction. The effect of $\varphi$ is to reverse all of these edges, which does not affect $k$-balance.

In either case, $\varphi$ is an involution between $\mathcal{C}_\alpha$ and $\mathcal{C}_{\widetilde{\alpha}}$, so we have the desired bijection.

\end{proof}

\subsection{Complete bipartite graphs}

For a general simple graph $G$, the coefficients of $X_G^k$ in the monomial basis directly count $k$-balanced colorings of $G$. However, in the case where $G$ is the complete bipartite graph $K_{m,n}$ and $k = 2$, there is more direct description of the coefficient any $M_\alpha$.

\begin{defn}
Let $i_1, \dots, i_k$ be positive integers. \textit{The complete ranked poset} $Q_{i_1, i_2, \ldots, i_k}$ is the poset on $\bigcup_{j=1}^k R_j$, where $|R_j| = i_j$ and each element in $R_j$ is covered by each element in $R_{j+1}$.
\end{defn}

\begin{thm} \label{CompBipartCalc}
For the complete bipartite graph $K_{m,n}$, we have
$$X_{K_{m,n}}^2 = \sum_{\alpha \in \comp(m+n)}\frac{m!n!}{\alpha!} r(\alpha; m, n)M_\alpha$$
where
$$r(\alpha;m,n) = |\{ (i,j) | 1 < i \leq j \leq \ell(\alpha) \text{ and } \sum_{t=i}^j \alpha_t = m \text{ or } n\}|$$
and
$$\alpha! = \alpha_1!\alpha_2! \cdots \alpha_{\ell(\alpha)}!.$$
\end{thm}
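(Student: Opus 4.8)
The plan is to compute the coefficient $[M_\alpha]X^2_{K_{m,n}}$ directly as a count of colorings and then reorganize that count. Write $A$ and $B$ for the two parts of $K_{m,n}$, with $|A|=m$ and $|B|=n$, and set $\ell=\ell(\alpha)$. Since $X^2_{K_{m,n}}$ is quasisymmetric, $[M_\alpha]X^2_{K_{m,n}}$ equals the number of $2$-balanced colorings $\kappa$ of content $x_1^{\alpha_1}\cdots x_\ell^{\alpha_\ell}$, i.e.\ those using each color $i$ exactly $\alpha_i$ times. In any proper coloring of $K_{m,n}$ the colors used on $A$ are disjoint from those used on $B$, so each color class lies entirely in one part. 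Thus $\kappa$ determines a \emph{side function} $\sigma\colon\{1,\dots,\ell\}\to\{A,B\}$ recording the part on which color $i$ appears, subject to $\sum_{\sigma(i)=A}\alpha_i=m$ (hence $\sum_{\sigma(i)=B}\alpha_i=n$); I call such a $\sigma$ \emph{admissible}. Conversely, each admissible $\sigma$ is realized by exactly $\frac{m!}{\prod_{\sigma(i)=A}\alpha_i!}\cdot\frac{n!}{\prod_{\sigma(i)=B}\alpha_i!}=\frac{m!\,n!}{\alpha!}$ colorings, independent of $\sigma$. So it suffices to show that the number of admissible $\sigma$ whose coloring is $2$-balanced is $r(\alpha;m,n)$.

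The key step is to characterize $2$-balance through $\sigma$: I claim $\kappa$ is $2$-balanced if and only if the word $\sigma(1)\sigma(2)\cdots\sigma(\ell)$ has at most three maximal blocks. Consider the induced orientation on a $4$-cycle $a_1,b_1,a_2,b_2$ (with $a_i\in A$, $b_i\in B$); its forward edges are exactly the cyclic ascents of the sequence $\kappa(a_1),\kappa(b_1),\kappa(a_2),\kappa(b_2)$. As this orientation is acyclic, it is a bypass precisely when it has a single cyclic ascent or descent, which a short check shows happens exactly when the four colors are distinct and their sorted order alternates between $A$-colors and $B$-colors. If $\sigma$ has four or more blocks there are $A$-colors $p<p'$ and $B$-colors $r<r'$ with $p<r<p'<r'$; placing them on a $4$-cycle produces a bypass, so $\kappa$ is not $2$-balanced. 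For the converse, suppose $\sigma$ has at most three blocks, say of the form $A^{+}B^{+}A^{+}$ (the cases $A^{+}B^{+}$, $B^{+}A^{+}$, $B^{+}A^{+}B^{+}$ are analogous). Give level $0$ to each $A$-vertex colored below the $B$-block, level $1$ to each $B$-vertex, and level $2$ to each $A$-vertex colored above the $B$-block; then every edge is oriented from its lower- to its higher-level endpoint and changes level by exactly one. Hence along any weak cycle the level increments sum to zero, so forward and backward edges are equinumerous, each being half the cycle length and thus at least two, and $\kappa$ is $2$-balanced.

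Finally I count the admissible $\sigma$ with at most three blocks by splitting on the first letter. If $\sigma(1)=A$ the word is $A^{+}B^{+}$ or $A^{+}B^{+}A^{+}$, so its $B$-entries form a single interval of colors $[i,j]$ with $i\ge 2$ and $\sum_{t=i}^{j}\alpha_t=n$; this is a bijection between such $\sigma$ and the pairs $(i,j)$ with $1<i\le j\le\ell$ and $\sum_{t=i}^{j}\alpha_t=n$. Symmetrically, the words with $\sigma(1)=B$ biject with the pairs $(i,j)$, $1<i\le j\le\ell$, for which $\sum_{t=i}^{j}\alpha_t=m$ (now the $A$-entries form the single interval). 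These two families are disjoint, and their combined size is $r(\alpha;m,n)$, reading the condition ``$\sum=m$ or $n$'' as the count of intervals summing to $m$ \emph{plus} the count summing to $n$ — which is exactly what the two-family split produces, and which differs from a literal union only in the degenerate case $m=n$, where each qualifying interval is realized once by a word of each type. Multiplying by the per-$\sigma$ factor $\frac{m!\,n!}{\alpha!}$ yields the stated formula.

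The main obstacle is the converse half of the $2$-balance characterization: it does not suffice to inspect $4$-cycles, because $2$-balance constrains \emph{all} weak cycles. The level-function argument is what makes the global statement go through, reducing balance along an arbitrary weak cycle to the fact that a closed walk in a graded orientation has as many up-steps as down-steps. A secondary point requiring care is the $4$-cycle analysis with repeated colors: if two vertices on the same side of a $4$-cycle share a color, the two edges at the opposite vertex are oriented oppositely, so no bypass can arise — which is precisely why only the alternating pattern of four \emph{distinct} colors is relevant.
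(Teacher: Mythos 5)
Your proof is correct, and it reaches the count by a genuinely different route than the paper. The paper works at the level of orientations: it uses the correspondence between $2$-balanced orientations and Hasse diagrams, argues that a poset whose Hasse diagram is $K_{m,n}$ has no long chains, and concludes that the $2$-balanced orientations are exactly those induced by the complete ranked posets $Q_{i,m,n-i}$ and $Q_{i,n,m-i}$; the per-poset coloring count then gives $\frac{m!\,n!}{\alpha!}$. You instead work at the level of colorings: you factor each coloring through its side word $\sigma$ and characterize $2$-balance by ``$\sigma$ has at most three maximal blocks,'' proving necessity with an explicit bypass on a $4$-cycle and sufficiency with the level argument (every edge raises the level by exactly one, so every weak cycle has equally many forward and backward edges, each at least two). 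Your level function is, of course, the rank function of the paper's ranked poset, so the two arguments are cousins; but yours is self-contained, needing neither the Hasse-diagram correspondence nor the classification of the posets, while the paper's leverages machinery it has already built and explicitly identifies the $2$-balanced orientations themselves, which is exactly the input its $L$-expansion in Theorem \ref{Lpos} consumes. One small imprecision in your necessity step: if the word starts with a $B$-block, the four chosen colors satisfy $r<p<r'<p'$ rather than $p<r<p'<r'$, but the mirrored pattern produces the mirrored bypass, so nothing is lost.

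Your handling of the degenerate case $m=n$ is in fact more careful than the paper's. Your count is (number of intervals summing to $n$) plus (number of intervals summing to $m$), since words starting with $A$ and words starting with $B$ are counted separately; when $m=n$ this is twice the cardinality of the set in the literal definition of $r(\alpha;m,n)$. The ``with multiplicity'' reading is the correct one: for $\alpha=(2,1,2,1)$ and $K_{3,3}$, the literal formula gives $9\cdot 2=18$, whereas the paper itself states $[M_{2121}]X^2_{K_{3,3}}=36$, which is what your count produces (the four words $AABB$, $ABBA$, $BBAA$, $BAAB$, each contributing $9$ colorings). The paper's proof implicitly makes the same double count -- its two families of posets $Q_{i,m,n-i}$ and $Q_{i,n,m-i}$ coincide as index sets when $m=n$ but give distinct orientations -- so your flag of this case is not pedantry; it identifies a genuine imprecision in how $r(\alpha;m,n)$ is stated in the theorem.
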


\begin{proof}
A 2-balanced orientation of a graph is precisely a realization of that graph as a Hasse diagram. So, we consider the posets which have Hasse diagram isomorphic to $K_{m,n}$. No such poset can have a chain of length 3, since in any chain of length 3 there must be an edge from the greatest to the smallest element, violating the fact that it is a Hassee diagram. Further, it is not hard to see that any of the complete ranked posets $Q_{i, m, n-i}$ for $0 < i \leq n$ or $Q_{i, n, m-i}$ for $0 < i \leq m$ have $K_{m,n}$ as their underlying graph. Thus, every 2-balanced orientation of $K_{m,n}$ comes from one of these posets.

We associate the coloring $\kappa$ with a composition $\alpha$, where $\alpha_i$ is the number of vertices colored with the $i^{\text{th}}$ smallest color. If a coloring agrees with one of the orientations as a complete ranked poset, no vertices of different ranks may have the same color. So, a coloring will be feasible if and only if its associated composition can be written as $\alpha = (\alpha', \alpha'', \alpha''')$, where $\alpha', \alpha'', \alpha'''$ are compositions of magnitudes either $i, m, n-i$ or $i, n, m-i$. That is, $\alpha$ comes from a feasible coloring if and only if there is a partial sum $\alpha_i + \dots + \alpha_j$ which equals $m$ or $n$. So $r(\alpha;m,n)$ counts the number of feasible colorings associated with $\alpha$ up to the number of vertices of each color.

In the case of $Q_{i,m,n-i}$, the bottom rank can be colored in $\binom{i}{\alpha'}$ ways, the middle rank colored in $\binom{m}{\alpha''}$ ways, and the top rank colored in $\binom{n-i}{\alpha'''}$ ways. Further, we must choose $i$ elements from the partite set with $n$ elements to lie in the bottom rank. Thus, the number of colorings on the poset $Q_{i, m, n-i}$ with composition type $\alpha$ is $$\binom{n}{i}\frac{i!}{\alpha'!}\frac{m!}{\alpha''!}\frac{(n-i)!}{\alpha'''!} = \frac{m!n!}{\alpha!}.$$ A similar calculation on $Q_{i,n,m-i}$ gives the same result, so that the number of 2-balanced colorings of $K_{m,n}$ with composition type $\alpha$ is $$\frac{m!n!}{\alpha!} r(\alpha; m, n)$$ as desired.

\end{proof}

\section{The $k$-balanced chromatic polynomial}

We now study the $k$-balanced versions of the chromatic polynomial of a graph.  Stanley's theorem \cite{RPS73} enumerating acyclic orientations via the chromatic polynomial turns out to have a natural generalization to the $k$-balanced setting.

\begin{defn}
The \textit{$k$-balanced chromatic polynomial of $G$} is the function $\chi_G^k : \mathbb{N} \to \mathbb{N}$ where $\chi_G^k(\lambda)$ is the number of $k$-balanced colorings of $G$ with $\lambda$ colors.
\end{defn}

Note that $\chi_G^k$ is a specialization of $X_G^k$. That is, 
$$\chi_G^k(\lambda) = X_G^k(\underbrace{1, 1, \ldots, 1}_{\lambda \text{ 1's}}, 0, 0, \ldots).$$
This allows us to prove the following fact.

\begin{prop}
The $k$-balanced chromatic polynomial $\chi_G^k(\lambda)$ is a polynomial in $\lambda$ with rational coefficients.
\end{prop}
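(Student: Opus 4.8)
The plan is to exploit the defining specialization $\chi_G^k(\lambda) = X_G^k(1^\lambda, 0, 0, \ldots)$ (meaning the first $\lambda$ variables are set to $1$ and the rest to $0$) together with the fact, established earlier, that $X_G^k$ is quasisymmetric. First I would observe that $X_G^k$ is homogeneous of degree $n = |V(G)|$, since every monomial $x_{\kappa(1)} \cdots x_{\kappa(n)}$ in its defining sum has degree $n$. Hence $X_G^k \in \mathcal{Q}_n$ and expands in the monomial basis as $X_G^k = \sum_{\alpha \models n} c_\alpha M_\alpha$, where $c_\alpha \in \mathbb{Z}_{\geq 0}$ is the number of $k$-balanced colorings whose multiset of color-class sizes, listed in increasing order of color, is $\alpha$. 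Because there are only finitely many compositions of $n$, this is a finite sum; in particular the specialization sets all but finitely many variables to $0$, so only finitely many monomials survive and $X_G^k(1^\lambda, 0, \ldots)$ is a genuine (finite) integer for each $\lambda$.

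The heart of the argument is then to evaluate a single monomial quasisymmetric function at these values. By definition $M_\alpha = \sum_{i_1 < \cdots < i_\ell} x_{i_1}^{\alpha_1} \cdots x_{i_\ell}^{\alpha_\ell}$, where $\ell = \ell(\alpha)$. Setting the first $\lambda$ variables to $1$ and the rest to $0$ kills every term with $i_\ell > \lambda$ and makes each surviving term equal to $1$; the surviving terms are indexed precisely by the strictly increasing sequences $i_1 < \cdots < i_\ell$ drawn from $[\lambda]$, that is, by the $\ell$-element subsets of $[\lambda]$. Therefore
\begin{equation*}
M_\alpha(1^\lambda, 0, 0, \ldots) = \binom{\lambda}{\ell(\alpha)}.
\end{equation*}
Since $\binom{\lambda}{\ell} = \tfrac{1}{\ell!}\,\lambda(\lambda-1)\cdots(\lambda-\ell+1)$ is a polynomial in $\lambda$ of degree $\ell$ with rational coefficients, summing gives $\chi_G^k(\lambda) = \sum_{\alpha \models n} c_\alpha \binom{\lambda}{\ell(\alpha)}$, a finite $\mathbb{Z}$-linear combination of such polynomials, hence itself a polynomial in $\lambda$ with rational coefficients.

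I do not expect a serious obstacle here: the only point requiring a little care is justifying that the infinite sum defining $X_G^k$ collapses to a well-defined quantity under the specialization, which is immediate from homogeneity and the boundedness of the degree. As a consistency check (and an alternative route tied directly to Theorem \ref{Lpos}), one can instead expand via the fundamental basis: a nearly identical counting argument, converting the strict ascents at positions in $S_\alpha$ into weak ones, yields $L_\alpha(1^\lambda, 0, \ldots) = \binom{\lambda - \ell(\alpha) + n}{n}$, again a rational polynomial in $\lambda$, so the $L$-positive expansion $X_G^k = \sum_{\mathcal{O}} \sum_{\pi \in \mathcal{L}_{P'_\mathcal{O}}} L_{\co(\pi)}$ specializes term-by-term to the same conclusion.
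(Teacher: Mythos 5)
Your proof is correct and follows essentially the same route as the paper: expand $X_G^k$ in the monomial basis and use the specialization $M_\alpha(1^\lambda,0,\ldots)=\binom{\lambda}{\ell(\alpha)}$, which is a rational polynomial in $\lambda$. The only difference is that you spell out the justification of that specialization (and add a correct alternative via the fundamental basis), details the paper leaves implicit.
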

\begin{proof}
We observe that 
\begin{align*}
\chi_G^k(\lambda) 	& = X_G^k(\underbrace{1, 1, \ldots, 1}_{\lambda \text{ 1's}}, 0, 0, \ldots) \\
				& = \sum_{i = 1}^n \sum_{\ell(\alpha) = i} c_\alpha M_\alpha(\underbrace{1, 1, \ldots, 1}_{\lambda \text{ 1's}}, 0, 0, \ldots) \\
				& = \sum_{i=1}^n \sum_{\ell(\alpha) = i} c_\alpha \binom{\lambda}{i},
\end{align*}
where the $c_\alpha$ are the integer coefficients of $M_\alpha$. In particular, $\chi_G^k$ is a polynomial in $\lambda$ with rational coefficients.
\end{proof}

It is well-known that the chromatic polynomial $\chi_G^1$ has integer coefficients. However, this is essentially the only $k$ for which this is true.

\begin{thm}
For $k > 1$, $\chi_G^k$ has integer coefficients if and only if $G$ is a forest or $G$ has no $k$-balanced coloring.
\end{thm}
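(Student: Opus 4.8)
The plan is to isolate the leading coefficient of $\chi_G^k$ and determine exactly when it is an integer. From the expansion established in the preceding proposition, write $\chi_G^k(\lambda) = \sum_{i=1}^n d_i \binom{\lambda}{i}$, where $d_i = \sum_{\ell(\alpha) = i} c_\alpha$ is the number of $k$-balanced colorings of $G$ using exactly $i$ distinct colors. Since $\binom{\lambda}{i}$ has degree $i$ with leading coefficient $1/i!$, whenever $d_n \neq 0$ the polynomial $\chi_G^k$ has degree $n = |V(G)|$ and leading coefficient exactly $d_n/n!$, with no cancellation possible in top degree. The entire argument turns on when this rational number is an integer.

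The ``if'' direction is immediate. If $G$ is a forest then its girth is infinite, so $k$-balance is vacuous and $X_G^k = X_G$, whence $\chi_G^k = \chi_G$ has integer coefficients; and if $G$ has no $k$-balanced coloring then $\chi_G^k = 0$. Both cases clearly lie in $\mathbb{Z}[\lambda]$.

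For the converse I argue the contrapositive: assuming $G$ has a $k$-balanced coloring and is \emph{not} a forest, I will show $d_n/n!$ is not an integer, so that $\chi_G^k \notin \mathbb{Z}[\lambda]$. First, $d_n$ counts exactly the rainbow $k$-balanced colorings, since the only composition with $\ell(\alpha) = |\alpha| = n$ is $(1,\dots,1)$; equivalently, $d_n$ is the number of linear orders on $V(G)$ whose induced orientation is $k$-balanced. To see $d_n \geq 1$, take any $k$-balanced orientation $\mathcal{O}$ (one exists by hypothesis) and any linear extension of $P_\mathcal{O}$; the associated rainbow coloring has induced orientation equal to $\mathcal{O}$ itself and so is $k$-balanced. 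To see $d_n < n!$, I exhibit a single rainbow coloring that fails $k$-balance: since $G$ is not a forest it contains a cycle $v_1 v_2 \cdots v_r v_1$, so choose a bijection $\kappa : V(G) \to \{1,\dots,n\}$ with $\kappa(v_1) < \kappa(v_2) < \cdots < \kappa(v_r)$. Traversing this cycle, the edges $\{v_j, v_{j+1}\}$ for $1 \leq j < r$ are all oriented forward in $\mathcal{O}_\kappa$ while only $\{v_r, v_1\}$ is oriented backward; a single backward edge against the requirement $k \geq 2$ violates $k$-balance. Hence $1 \leq d_n \leq n! - 1$, so $0 < d_n/n! < 1$ is not an integer, and $\chi_G^k$ has a non-integral leading coefficient.

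The computations are routine; the one point requiring care is the identification of $d_n$ with the rainbow $k$-balanced colorings together with the two bounds $d_n \geq 1$ and $d_n < n!$, which is precisely where the two hypotheses enter (the existence of a $k$-balanced coloring forces the top coefficient nonzero, and a cycle forces it to be a proper fraction). The main obstacle, such as it is, is verifying the two structural facts invoked above: that a linear extension of $P_\mathcal{O}$ recovers $\mathcal{O}$ as its induced orientation, and that the monotone-around-a-cycle coloring genuinely fails $k$-balance. Both follow directly from the definition of the induced orientation and of the poset $P_\mathcal{O}$.
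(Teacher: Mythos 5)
Your proposal is correct and follows essentially the same route as the paper: both handle the forest and no-coloring cases directly, then for the converse isolate the leading coefficient $c_{11\cdots 1}/n!$ (your $d_n/n!$) and show it lies strictly between $0$ and $1$ by producing a rainbow $k$-balanced coloring from a linear extension of $P_\mathcal{O}$ and a rainbow coloring monotone along a cycle that creates a bypass. Your write-up just makes the two structural verifications slightly more explicit than the paper does.
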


\begin{proof}
If $G$ is a forest, then since $G$ has no weak cycles, any orientation is $k$-balanced for all $k$ . Thus, $\chi_G^k = \chi_G^1$ has integer coefficients. Alternately, if $G$ has no $k$-balanced coloring, then $\chi_G^k = 0$.

On the other hand, if $G$ has a cycle and a $k$-balanced coloring, then the leading coefficient of $\chi_g^k(\lambda)$ is
\begin{align*}
[\lambda^n]\chi_G^k(\lambda) & = [\lambda^n] \sum_{i=1}^n \sum_{\ell(\alpha) = i} c_\alpha \binom{\lambda}{i} \\
& = \frac{c_{11\dots 1}}{n!}.
\end{align*}

Notice that $c_{11\dots 1}$ is precisely the number of $k$-balanced colorings of $G$ with distinct colors. Since $G$ contains a cycle, there exist colorings of $G$ with distinct colors that are not $k$-balanced. That is, if the cycle consists of the vertices $v_1, v_2, \dots, v_t$ in order, assign them the colors $1, 2, \dots, t$ respectively to obtain such a coloring. Thus, $c_{11 \dots 1} < n!$. 

Since $G$ possesses a $k$-balanced coloring, it possesses a $k$-balanced coloring with distinct colors--a natural relabelling of the induced orientation will give such a coloring--so that $c_{11 \dots 1} > 0$. Thus, the leading coefficient of $\chi_G^k$ is not an integer.
\end{proof}

Stanley \cite{RPS73} proved the $k=1$ case of the following theorem --- that is, when the orientations in question are acyclic and the colorings are simply proper colorings.

\begin{thm} \label{ChrPolyEval}
$(-1)^n\chi_G^k(-\lambda)$ is the number of pairs $(\kappa, \mathcal{O})$ where

\begin{itemize}
\item $\mathcal{O}$ is a $k$-balanced orientation of $G$;
\item $\kappa$ is a proper coloring of $G$ with $\lambda$ colors;
\item $\kappa(i) \leq \kappa(j)$ if and only if $(i,j)$ is an edge of $\mathcal{O}$.
\end{itemize}
\end{thm}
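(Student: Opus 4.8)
The plan is to deduce the theorem from the $P$-partition expansion $X_G^k = \sum_{\mathcal{O}} K_{P_\mathcal{O}}$ established in Theorem \ref{Lpos}, where $\mathcal{O}$ ranges over the $k$-balanced orientations of $G$, by applying Stanley's reciprocity theorem for order polynomials one orientation at a time. The key structural observation is that each $k$-balanced orientation $\mathcal{O}$ carries exactly one poset $P_\mathcal{O}$ on the full vertex set $[n]$, so the global statement follows from the classical order-polynomial reciprocity applied to the individual posets $P_\mathcal{O}$, with the common sign $(-1)^n$ factoring out of the sum.

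First I would express $\chi_G^k$ as a sum of strict order polynomials. Specializing the expansion of Theorem \ref{Lpos} at $x_1 = \cdots = x_\lambda = 1$ and $x_i = 0$ for $i > \lambda$, and using that $K_P = \sum_{\tau \in \mathcal{A}(P)} x^\tau$ is the generating function for $P$-partitions, I obtain
\begin{equation*}
\chi_G^k(\lambda) = \sum_{\mathcal{O}} \bar\Omega_{P_\mathcal{O}}(\lambda),
\end{equation*}
where $\bar\Omega_{P}(\lambda)$ is the strict order polynomial counting strict order-preserving maps $P \to [\lambda]$. Here I would verify that this specialization of $K_{P_\mathcal{O}}$ counts precisely the $P_\mathcal{O}$-partitions with values in $[\lambda]$, which by the paper's definition of a $P$-partition are exactly the strict order-preserving maps into $[\lambda]$.

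Next I would apply reciprocity termwise. Because each $P_\mathcal{O}$ has $n$ elements, Stanley's order-polynomial reciprocity \cite[\S4.5]{EC1} gives $(-1)^n \bar\Omega_{P_\mathcal{O}}(-\lambda) = \Omega_{P_\mathcal{O}}(\lambda)$, where $\Omega_P(\lambda)$ counts the (weakly) order-preserving maps $P \to [\lambda]$. Summing over all $k$-balanced $\mathcal{O}$ then yields
\begin{equation*}
(-1)^n \chi_G^k(-\lambda) = \sum_{\mathcal{O}} \Omega_{P_\mathcal{O}}(\lambda).
\end{equation*}
It remains to identify the right-hand side with the pair count in the theorem. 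For a fixed $k$-balanced $\mathcal{O}$, the maps $\kappa:[n]\to[\lambda]$ counted by $\Omega_{P_\mathcal{O}}(\lambda)$ are those order-preserving on $P_\mathcal{O}$; since the order of $P_\mathcal{O}$ is the transitive closure of the directed edges of $\mathcal{O}$, this is equivalent to demanding $\kappa(i) \le \kappa(j)$ for every edge $(i,j)$ of $\mathcal{O}$, which is the compatibility between $\kappa$ and $\mathcal{O}$ recorded in the third condition of the theorem. Hence each term enumerates the colorings forming a pair with that $\mathcal{O}$, and the sum enumerates all such pairs $(\kappa, \mathcal{O})$.

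The step I expect to be most delicate is the strict/weak bookkeeping, together with making the identification of pairs uniform in $k$. The paper adopts the convention (flagged at the start of the $P$-partition subsection) that its $P$-partitions are Stanley's reverse strict $P$-partitions, so I must be careful that $K_{P_\mathcal{O}}$ specializes to the \emph{strict} order polynomial and that reciprocity is therefore applied with the correct strict/weak pairing and sign; a single slip here interchanges $\bar\Omega$ and $\Omega$. The second subtlety is that for $k \ge 2$ the edges of $\mathcal{O}$ are precisely the covering relations of $P_\mathcal{O}$, whereas for $k = 1$ the orientation is only acyclic and some edges need not be covers; I would check that order-preservation on $P_\mathcal{O}$ is nonetheless equivalent to the edgewise condition on $\mathcal{O}$, by transitivity of $\le$ on $[\lambda]$, so that the description of the pairs is valid for every $k$ simultaneously.
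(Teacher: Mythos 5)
Your proposal is correct and follows essentially the same route as the paper's own proof: specialize the expansion $X_G^k = \sum_{\mathcal{O}} K_{P_\mathcal{O}}$ from Theorem \ref{Lpos} to obtain $\chi_G^k(\lambda) = \sum_{\mathcal{O}} \overline{\Omega}(P_\mathcal{O},\lambda)$, apply Stanley's order-polynomial reciprocity $\overline{\Omega}(P,-\lambda) = (-1)^{|P|}\Omega(P,\lambda)$ termwise, and identify weak order-preserving maps $P_\mathcal{O} \to [\lambda]$ with colorings compatible with $\mathcal{O}$. Your extra care about the $k=1$ case (where edges of $\mathcal{O}$ need not be covering relations, so one must invoke transitivity to reduce order-preservation to the edgewise condition) is a point the paper leaves implicit, but it does not change the argument.
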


\begin{proof}
Let $\Omega(P, \lambda)$ be the \textit{order polynomial} of a poset $P$ --- the number of order-preserving maps from $P$ to $[\lambda]$ --- and let $\overline {\Omega}(P, \lambda)$ be the \textit{strict order polynomial} of $P$ --- the number of strict order-preserving maps from $P$ to $[\lambda]$.

Since such a strict-order preserving map is a $P$-partition, we find that
$$K_{P_\mathcal{O}}(\underbrace{1, 1, \ldots, 1}_{\lambda \text{ 1's}}, 0, 0, \ldots) = \overline{\Omega}(P_\mathcal{O}, \lambda).$$
Applying these ideas to posets induced by orientations of $G$, we have $X_G^k = \sum_{\mathcal{O}}K_{P_\mathcal{O}}$, so that
$$\chi_G^k(\lambda) = \sum_{\mathcal{O}} \overline{\Omega}(P_\mathcal{O}, \lambda),$$
where the sum is over all $k$-balanced orientations $\mathcal{O}$.

Now we can use the fact from \cite[p. 174]{RPS73} that $\overline{\Omega}(P, -\lambda) = (-1)^{|P|}\Omega(P, \lambda)$ to get
$$\chi_G^k(-\lambda) = (-1)^n \sum_{\mathcal{O}} \Omega(P_\mathcal{O}, \lambda).$$

Lastly, an order-preserving map from $P_\mathcal{O}$ to $[\lambda]$ can be regarded as a coloring of $G$ which agrees with the orientation $\mathcal{O}$ in the above sense.
\end{proof}

\begin{cor}
$(-1)^n\chi_G^k(-1)$ is the number of $k$-balanced orientations of $G$.
\end{cor}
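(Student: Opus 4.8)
The plan is to derive the corollary by specializing Theorem~\ref{ChrPolyEval} to the single value $\lambda = 1$. Substituting $\lambda = 1$ into the theorem gives immediately that $(-1)^n \chi_G^k(-1)$ equals the number of pairs $(\kappa, \mathcal{O})$ in which $\mathcal{O}$ is a $k$-balanced orientation of $G$ and $\kappa$ is a coloring using a single color that is compatible with $\mathcal{O}$. So the entire content of the corollary is the claim that, at $\lambda = 1$, these pairs are counted by the orientations alone.

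First I would observe that with only one available color there is exactly one coloring $\kappa$, the constant map sending every vertex to $1$. Because all vertices then receive equal colors, the compatibility relation between $\kappa$ and the edges of $\mathcal{O}$ holds trivially, and so this constant coloring is compatible with \emph{every} orientation $\mathcal{O}$. Hence the pairs counted at $\lambda = 1$ are in bijection with the $k$-balanced orientations of $G$: each $\mathcal{O}$ contributes precisely one valid pair $(\kappa, \mathcal{O})$, and no orientation is counted twice. This yields the stated equality.

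A cleaner route, which I would actually write down, is to extract the intermediate identity appearing in the proof of Theorem~\ref{ChrPolyEval}, namely $\chi_G^k(-\lambda) = (-1)^n \sum_{\mathcal{O}} \Omega(P_\mathcal{O}, \lambda)$, where the sum runs over $k$-balanced orientations and $\Omega$ is the order polynomial. Setting $\lambda = 1$ and using that $\Omega(P, 1) = 1$ for every poset $P$ (the only order-preserving map into a one-element chain is constant) collapses each summand to $1$, leaving exactly the number of $k$-balanced orientations. This phrasing sidesteps any ambiguity in the edge-by-edge reading of the compatibility clause.

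I expect the only genuine subtlety to be the admissibility of the coloring at $\lambda = 1$: one must be certain that the single available coloring is compatible with \emph{every} $k$-balanced orientation, so that orientations are neither over- nor under-counted. This is precisely the evaluation $\Omega(P_\mathcal{O}, 1) = 1$, which is why I would route the argument through the order polynomial rather than through the pair-count in the theorem statement. Since this point is elementary once stated correctly, the proof is short, and the main care lies in quoting the correct specialization.
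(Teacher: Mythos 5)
Your proposal is correct, and it follows the only route available: the paper states this corollary with no proof at all, intending it as the immediate specialization $\lambda=1$ of Theorem~\ref{ChrPolyEval}, which is exactly what you do. Your instinct to route the argument through the intermediate identity $\chi_G^k(-\lambda) = (-1)^n \sum_{\mathcal{O}} \Omega(P_\mathcal{O}, \lambda)$ and the evaluation $\Omega(P_\mathcal{O},1)=1$ is not just stylistic caution --- it is genuinely the more defensible version. Read literally, the statement of Theorem~\ref{ChrPolyEval} requires $\kappa$ to be a \emph{proper} coloring (and in this paper ``coloring'' already means proper), in which case at $\lambda=1$ there would be \emph{no} valid pairs for any graph with an edge, and the corollary would fail; moreover the ``if and only if'' compatibility clause cannot hold verbatim for a constant map. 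The proof of the theorem shows that the objects actually being counted are order-preserving (not necessarily strict) maps $P_\mathcal{O} \to [\lambda]$, i.e.\ arbitrary maps compatible with $\mathcal{O}$, and your second argument works directly with those, so it lands on the correct count of one pair per $k$-balanced orientation. In short: your first paragraph matches the paper's intent, and your second paragraph quietly repairs a defect in how the theorem is stated, which is exactly the right thing to have worried about.
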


\bibliographystyle{acm}
\bibliography{bibliography}

\begin{thebibliography}{1}

\bibitem{BollGraphThy}
{\sc Bollob{\'a}s, B.}
\newblock {\em Modern graph theory}.
\newblock Springer-Verlag, New York, 1998.

\bibitem{NesRod78}
{\sc Ne{\v{s}}et{\v{r}}il, J., and R{\"o}dl, V.}
\newblock On a probabilistic graph-theoretical method.
\newblock {\em Proc. Amer. Math. Soc. 72}, 2 (1978), 417--421.

\bibitem{Pret85}
{\sc Pretzel, O.}
\newblock On graphs that can be oriented as diagrams of ordered sets.
\newblock {\em Order 2}, 1 (1985), 25--40.

\bibitem{RPS73}
{\sc Stanley, R.~P.}
\newblock Acyclic orientations of graphs.
\newblock {\em Discrete Math. 5\/} (1973), 171--178.

\bibitem{RPS95}
{\sc Stanley, R.~P.}
\newblock A symmetric function generalization of the chromatic polynomial of a
  graph.
\newblock {\em Adv. Math. 111}, 1 (1995), 166--194.

\bibitem{EC1}
{\sc Stanley, R.~P.}
\newblock {\em Enumerative combinatorics. {V}ol. 1}.
\newblock Cambridge University Press, Cambridge, 1997.

\bibitem{EC2}
{\sc Stanley, R.~P.}
\newblock {\em Enumerative combinatorics. {V}ol. 2}.
\newblock Cambridge University Press, Cambridge, 1999.

\end{thebibliography}
\end{document}